\documentclass[11pt]{amsart}
\usepackage[all]{xy}
\usepackage{a4wide}
\usepackage{amssymb}
\usepackage{amsthm}
\usepackage{amsmath}
\usepackage{amscd,enumitem}
\usepackage{verbatim}
\usepackage{float}
\usepackage{color}
\usepackage[all]{xy}
\usepackage{hyperref}
\usepackage{cleveref}
\textheight8.8in \textwidth6.7in \numberwithin{equation}{section}

\theoremstyle{plain}
\newtheorem{theorem}{Theorem}
\numberwithin{theorem}{section}
\newtheorem{corollary}[theorem]{Corollary}

\newtheorem{lemma}[theorem]{Lemma}
\newtheorem{proposition}[theorem]{Proposition}

\theoremstyle{definition}

\theoremstyle{remark}
\newtheorem*{remark}{Remark}

\newcommand{\Z}{\mathbb{Z}}
\newcommand{\N}{\mathbb{N}}
\newcommand{\C}{\mathbb{C}}

\renewcommand{\H}{\mathbb{H}}

\newcommand{\fraka}{\mathfrak a}

\newcommand{\HH}{\mathbb{H}}
\newcommand{\Eis}{\mathcal{E}}
\newcommand{\sm}{\text {\rm sm}}



\newcommand{\SO}{\operatorname{SO}}

\newcommand{\SL}{{\text {\rm SL}}}


\newcommand{\pihol}{\pi_{hol}}

\newcommand{\SLZ}{\SL_2(\Z)}

\begin{document}

\title[Mock modular Eisenstein series with Nebentypus]{Mock modular Eisenstein series with Nebentypus}

\author{Michael H. Mertens, Ken Ono, and Larry Rolen}

\dedicatory{In celebration of Bruce Berndt's 80th birthday}

\address{Max-Planck-Insitut f\"ur Mathematik Bonn, Vivatsgasse 7, 53111 Bonn, Germany}
\email{mhmertens@mpim-bonn.mpg.de}

\address{Department of Mathematics, University of Virginia,
Charlottesville, VA 22904} \email{ken.ono691@gmail.com}

\address{Department of Mathematics, 1420 Stevenson Center, Vanderbilt University,
Nashville, TN 37240}
\email{larry.rolen@vanderbilt.edu}

\thanks{The second author thanks the support of the NSF and the Asa Griggs Candler Fund and the Thomas Jefferson Fund.
}
\subjclass[2010]{11F03, 11F37, 11F30}

\begin{abstract} 
By the theory of Eisenstein series,  generating functions of various divisor functions arise as
modular forms. It is natural to ask whether further divisor functions arise systematically
in the theory of mock modular forms.  We establish, using the method of Zagier and Zwegers on holomorphic projection,
that  this is indeed the case for certain (twisted) ``small divisors'' summatory functions
$\sigma_{\psi}^{\sm}(n)$. More precisely, in terms of the
weight 2 quasimodular Eisenstein series $E_2(\tau)$ and a generic Shimura
theta function $\theta_{\psi}(\tau)$, we show that there is a constant $\alpha_{\psi}$ for which
$$
\Eis^{+}_{\psi}(\tau):= \alpha_{\psi}\cdot\frac{E_2(\tau)}{\theta_{\psi}(\tau)}+
\frac{1}{\theta_{\psi}(\tau)} \sum_{n=1}^\infty \sigma^{\sm}_\psi(n)q^n 
$$
is a half integral weight (polar) mock modular form.
These  include generating functions for combinatorial objects such as 
the Andrews $spt$-function and the ``consecutive parts'' partition function.
Finally, in analogy with Serre's result that the weight $2$ Eisenstein series
is a $p$-adic modular form, we show that these forms  possess canonical congruences  with  modular forms.
\end{abstract}

\maketitle

\section{Introduction and statement of results}
In the theory of mock theta functions and its applications to combinatorics as developed by Andrews, Hickerson, Watson, and many others, various formulas for $q$-series representations have played an important role. For instance, the generating function $R(\zeta;q)$ for partitions organized by their ranks is given by:
\[
R(\zeta;q):=\sum_{\substack{n\geq0\\ m\in\Z}}N(m,n)\zeta^mq^n=\sum_{n\geq0}\frac{q^{n^2}}{(\zeta q;q)_n(\zeta^{-1}q;q)_n}=\frac{1-\zeta}{(q;q)_{\infty}}\sum_{n\in\Z}\frac{(-1)^nq^{\frac{n(3n+1)}{2}}}{1-\zeta q^n},
\]
where $N(m,n)$ is the number of partitions of $n$ of rank $m$ and $(a;q)_n:=\prod_{j=0}^{n-1}(1-aq^j)$ is the usual $q$-Pochhammer symbol.
The function $R(\zeta;q)$, when $\zeta$ is specialized to any root of unity times a certain rational power of $q$, is (up to a certain rational power of $q$) a mock modular form on a congruence subgroup; see 
\cite{BringmannOno,Kang,Zagier} for discussion and applications.
For instance, Ramanujan's original example of the third order mock theta function $f(q)$ is
\[
f(q):=R(-1:q)=\sum_{n\geq0}\frac{q^{n^2}}{(-q;q)_n^2},
\]
which shows that the Fourier coefficients of $f(q)$ enumerate the number of partitions of $n$ with even rank minus the number of partitions of $n$ with odd rank. 

Many  identities involving partition ranks are subtle and require delicate $q$-series techniques to prove. Thus, it is interesting to ask for further constructions of mock theta functions that will likely play a future role in the study of ranks and related partition functions. Here we take inspiration from the fact that many classical modular forms are generated by Eisenstein series 
which, thanks to their representations as Lambert series, have nice $q$-expansions in terms of divisor power sums. Such representations have long been a staple in the theory of modular forms and their applications.

To review one classical situation, we first recall some notation.
The $v$-th power divisor functions $\sigma_v(n):=\sum_{d\mid n}d^v$, for odd $v\geq 3$, arise naturally in the theory of modular forms.
Indeed, their generating functions are essentially the
classical Eisenstein series $G_k(\tau)$, defined for positive even integers $k$ by
$$
G_k(\tau):=\frac{1}{2}\zeta(1-k) +\sum_{n=1}^{\infty} \sigma_{k-1}(n)q^n, 
$$
where $q:=e^{2\pi i \tau}$. 
In particular, if $k\geq 4$, then $G_k(\tau)$ is a weight $k$ holomorphic modular form
for the full modular group $\SL_2(\Z)$.  

Hecke's well-known theory \cite{Hecke} of modular forms includes many further divisor functions. For example, for a primitive Dirichlet character $\chi$ modulo $N$,
consider the complementary divisor functions
$$
\sigma_{v,\chi}(n):=\sum_{d\mid n} \chi(d)d^v \ \ \ \ {\text {\rm and}}\ \ \ \ 
\sigma'_{v,\chi}(n):= \sum_{d\mid n} \chi(n/d)d^v.
$$
If $k>2$ and $(-1)^k=\chi(-1)$, then Hecke's work implies that the functions
\begin{displaymath}
G_{k,\chi}(\tau):=\frac{1}{2}L(1-k,\chi)+\sum_{n=1}^{\infty} \sigma_{k-1,\chi}(n)q^n \ \ \ \ {\text {\rm and}}\ \ \ \ 
\widehat{G}_{k,\chi}(\tau):=\sum_{n=1}^{\infty}\sigma'_{k-1,\chi}(n)q^n,
\end{displaymath}
where $L(s,\chi)$ denotes the standard Dirichlet $L$-function of $\chi$,  are holomorphic weight $k$ modular forms on $\Gamma_0(N)$  with Nebentypus $\chi$. 

The weight $k=2$ case is particularly interesting.
Hecke obtained a modification of $G_2(\tau)$ which establishes that  
\[E_2^*(\tau):=1-24\sum_{n=1}^\infty \sigma_1(n)q^n-\frac{3}{\pi \Im(\tau)}\] 
is a weight 2 harmonic Maa{\ss} form. Therefore, we may view
 $E_2(\tau):=1-24\sum_{n=1}^{\infty}\sigma_1(n)q^n$  as a ``mock modular form'', the holomorphic part of the nonholomorphic modular form $E_2^{*}(\tau)$ (see for instance \cite[Section 6.1]{AMSColloq} for more details).
In view of this example, it is natural to ask whether there are further divisor functions which arise naturally in the theory of mock modular forms. 
We show that this is indeed the case for the generating functions of certain twisted  ``small divisors''  summatory functions.  In particular, we shall associate these functions with
Shimura's weight 1/2 and 3/2 theta functions.

To make this precise, suppose that
$\psi \! \pmod {f_{\psi}}$ is a primitive Dirichlet character with conductor $f_\psi>1$.  We then define
$\lambda_{\psi}:=(1-\psi(-1))/2$, so that $\lambda_{\psi}=0$ (resp. $\lambda_{\psi}=1$) if $\psi$ is even (resp. odd).
The corresponding Shimura theta function
\cite{Shimura} of weight $\lambda_{\psi}+\frac{1}{2}$ on $\Gamma_0(4 f_{\psi}^2)$ with Nebentypus $\psi$ is defined as
 \begin{equation}
 \theta_{\psi}(\tau):=\begin{cases} \sum_{n=1}^\infty\psi(n)q^{n^2} \ \ \ \ \ &{\text {\rm if $\lambda_{\psi}=0$}},\\ \ \ \ \\
 \sum_{n=1}^{\infty}\psi(n)nq^{n^2} \ \ \ \ \ &{\text {\rm if $\lambda_{\psi}=1$.}}
 \end{cases}
 \end{equation}
It is a well-known fact that $\theta_\psi$ is a modular form, more precisely we have
$\theta_{\psi}(\tau)\in M_{\frac{1}{2}}(\Gamma_0(4 f_{\psi}^2),\psi)$ if $\lambda_\psi=0$ resp. $\theta_\psi\in S_{\frac{3}{2}}(\Gamma_0(4 f_{\psi}^2),\psi\chi_{-4})$ with $\chi_{-4}$ denoting the non-trivial character modulo $4$ if $\lambda_{\psi}=1$.

We define a twisted  ``small divisors'' summatory function for any non-trivial Dirichlet character $\psi$.
To ease notation, if $n$ is a positive integer, then we define its small divisors by
\begin{equation}
D_n := \{ d\mid n \ : \  1\leq d\leq n/d \ \ {\text {\rm and}}\ \ 
d\equiv n/d \pmod 2\}.
\end{equation}
Then the $\psi$-twisted small divisors summatory function is defined by
\begin{equation}\label{SigmaSmallDefn}
\sigma^{\sm}_{\psi}(n):=\sum_{d\in D_n} \psi\left(\frac{(n/d)^2-d^2}{4}\right)d.
\end{equation}
We let $\Eis^{+}_{\psi}(\tau)$ denote its generating function divided by $\theta_{\psi}(\tau),$ as in the
following expression
\begin{equation}\label{EisPlusDefn}
\Eis^{+}_{\psi}(\tau):= \alpha_\psi \cdot \frac{E_2(\tau)}{\theta_{\psi}(\tau)}+\frac{1}{\theta_{\psi}(\tau)}\cdot \sum_{n=1}^\infty \sigma^{\sm}_\psi(n)q^n,
\end{equation}
where $\alpha_{\psi}$ is a suitable, explicitly computable constant (see \Cref{Proofs}), depending only on $\psi$.

It turns out that $\Eis^{+}_{\psi}(\tau)$ is a polar mock modular form, the holomorphic part of a weight $\frac{3}{2}-\lambda_{\psi}$
{\it polar harmonic Maa{\ss}  form}.  Recall that polar harmonic Maa{\ss} forms have holomorphic parts which are permitted to have poles
in $\H$ (see Ch. 13 of \cite{AMSColloq}).
Its nonholomorphic part is given by the normalized period integral 
\begin{equation}
\Eis^{-}_{\psi}(\tau):= (-1)^{\lambda_\psi}\frac{(2\pi)^{\lambda_\psi-1/2}i}{8\Gamma(1/2+\lambda_{\psi})}\cdot \int_{-\overline\tau}^{i\infty} \frac{\theta_{\overline \psi}(z)}{(-i(z+\tau))^{3/2-\lambda_\psi}}dz.
\end{equation}
  
Using the constructive method of Zagier and Zwegers 
(for example, see \cite{ARZ,BKZ, IRR}), where harmonic Maa{\ss} forms are constructed via holomorphic projection, we obtain the following theorem.
 
\begin{theorem}\label{HMFGenFcn}
Assuming the notation above, we have that $\Eis_{\psi}(\tau):=\Eis_{\psi}^{+}(\tau) +\Eis_{\psi}^{-}(\tau)$
is a weight  $\frac{3}{2}-\lambda_{\psi}$ polar harmonic weak Maa{\ss}  form on $\Gamma_0(4f_{\psi}^2)$ with Nebentypus
$\overline{\psi}\cdot \chi_{-4}^{\lambda_{\psi}}$. 
\end{theorem}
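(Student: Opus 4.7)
The plan is to establish the theorem via the holomorphic projection method of Zagier and Zwegers, as used in \cite{ARZ,BKZ,IRR}. Specifically, I would multiply $\Eis_\psi$ by the theta function $\theta_\psi$ (of weight $\lambda_\psi + \tfrac{1}{2}$) to obtain the weight $2$ object
\[
F(\tau) := \theta_\psi(\tau)\cdot\Eis_\psi(\tau) = \Bigl(\alpha_\psi E_2(\tau) + \sum_{n\geq1}\sigma_\psi^{\sm}(n)q^n\Bigr) + \theta_\psi(\tau)\,\Eis_\psi^-(\tau),
\]
and show that $F$ is a weight $2$ polar harmonic Maa{\ss} form on $\Gamma_0(4f_\psi^2)$ with trivial Nebentypus. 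The theorem then follows upon dividing by $\theta_\psi$, with polar singularities arising at the zeros of $\theta_\psi$ and the Nebentypus of $\Eis_\psi$ given by the inverse of the Nebentypus of $\theta_\psi$, namely $(\psi\chi_{-4}^{\lambda_\psi})^{-1} = \overline\psi\,\chi_{-4}^{\lambda_\psi}$.

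For the elementary structural properties, both $\Eis_\psi^+$ (meromorphic) and $\Eis_\psi^-$ are annihilated by $\Delta_{3/2-\lambda_\psi}$; the latter because a direct application of $\partial_{\overline\tau}$ to the Eichler integral gives
\[
\partial_{\overline\tau}\Eis_\psi^-(\tau) \,=\, \text{(explicit constant)}\cdot \frac{\overline{\theta_\psi(\tau)}}{(2y)^{3/2-\lambda_\psi}},
\]
where $y = \im(\tau)$. This also identifies the shadow $\xi_{3/2-\lambda_\psi}\Eis_\psi^-$ as an explicit nonzero multiple of $\theta_\psi \in M_{\lambda_\psi+1/2}(\Gamma_0(4f_\psi^2),\psi\chi_{-4}^{\lambda_\psi})$, matching the expected weight and character.

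The heart of the argument is to establish that $F$ transforms correctly, i.e., that the cocycle-type anomaly of $\theta_\psi\Eis_\psi^-$ is exactly cancelled by the quasimodular anomaly of $\alpha_\psi E_2$. I would carry this out by computing the holomorphic projection $\pi_{\text{hol}}(\theta_\psi\,\Eis_\psi^-)$. Substituting $z = -\overline\tau + it$ in the defining Eichler integral and expanding both theta series term-by-term gives the Fourier expansion
\[
\theta_\psi(\tau)\,\Eis_\psi^-(\tau) \,=\, iC_\psi\sum_{m,n\geq1}\psi(m)\overline\psi(n)(mn)^{\lambda_\psi}\,q^{m^2-n^2}\,e^{-4\pi n^2 y}\int_0^\infty\frac{e^{-2\pi n^2 t}}{(2y+t)^{3/2-\lambda_\psi}}\,dt,
\]
with $C_\psi$ the explicit prefactor from the definition of $\Eis_\psi^-$. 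Applying the weight $2$ holomorphic projection formula, swapping the order of integration, and evaluating the resulting Gamma-function integrals then yields a weight $2$ quasimodular form. Its positive-index Fourier coefficients, under the re-parametrization $d = m-n$, $e = m+n$ (so that $N = m^2 - n^2 = de$ with $d,e$ of the same parity), reassemble into $\sigma_\psi^{\sm}(N)$, while the diagonal contribution $m = n$, handled by the Hecke-style regularization required at the borderline weight $k = 2$, produces the $\alpha_\psi E_2$ term and simultaneously pins down the constant $\alpha_\psi$ in terms of $L$-values of $\psi$.

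Once the identity $\pi_{\text{hol}}(\theta_\psi\,\Eis_\psi^-) = -\bigl(\alpha_\psi E_2 + \sum\sigma_\psi^{\sm}(n)q^n\bigr)$ is established, $F$ is realized as a weight $2$ polar harmonic Maa{\ss} form: the difference $\theta_\psi\Eis_\psi^- - \pi_{\text{hol}}(\theta_\psi\Eis_\psi^-)$ is a bona fide nonholomorphic weight $2$ modular form (the orthogonal complement to holomorphic forms under the Petersson inner product), and adding back the holomorphic piece $\alpha_\psi E_2 + \sum\sigma_\psi^{\sm}(n)q^n$ recovers $F$ with the correct modular transformation. Dividing by $\theta_\psi$ gives $\Eis_\psi$, which is a polar harmonic Maa{\ss} form of weight $3/2-\lambda_\psi$ with Nebentypus $\overline\psi\,\chi_{-4}^{\lambda_\psi}$, as claimed. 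The main obstacle will be the holomorphic projection at the borderline weight $2$: the defining $y$-integral is only conditionally convergent, and the Hecke regularization needed to correctly isolate the $\alpha_\psi E_2$ contribution, together with verifying uniformly across both parities of $\lambda_\psi$ that the off-diagonal sums reassemble into the precise small-divisors function $\sigma_\psi^{\sm}$ (as opposed to a nearby variant), forms the technical heart of the argument.
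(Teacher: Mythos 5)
Your overall strategy---pass to the weight $2$ object $\theta_\psi\Eis_\psi$, compute the holomorphic projection of the nonholomorphic piece, and verify that it exactly cancels $\alpha_\psi E_2+\sum_{n\geq 1}\sigma^{\sm}_\psi(n)q^n$---is the same as the paper's; your reassembly of the off-diagonal terms into $\sigma^{\sm}_\psi$ via $d=m-n$, $e=m+n$ matches the paper's application of Proposition~\ref{propholproj}, and your Hecke-style regularization of the diagonal is essentially the alternative route sketched in the remark following the paper's proof. The problem is your concluding step. You assert that $\theta_\psi\Eis_\psi^- - \pihol(\theta_\psi\Eis_\psi^-)$ is ``a bona fide nonholomorphic weight $2$ modular form (the orthogonal complement to holomorphic forms under the Petersson inner product),'' and that adding back the holomorphic part therefore yields something modular. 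This is not a valid deduction: $\theta_\psi\Eis_\psi^-$ is not modular on its own, so neither Petersson orthogonality nor the value of its holomorphic projection says anything, by itself, about its transformation behavior. Orthogonality to cusp forms is a property one can only formulate for functions already known to transform correctly; it is never a criterion for modularity.

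The mechanism that actually delivers modularity is the equivariance of $\pihol$ under the slash action together with the fact that $\pihol$ is the identity on holomorphic functions (Lemma~\ref{lempihol}, parts (2) and (3)): since $\xi_{2}(\theta_\psi\Eis_\psi)=C\,y^{\lambda_\psi+1/2}\abs{\theta_\psi(\tau)}^2$ is modular of weight $0$, each error term $(\theta_\psi\Eis_\psi)|_2\gamma-\theta_\psi\Eis_\psi$ is holomorphic, and it has vanishing holomorphic projection because $\pihol(\theta_\psi\Eis_\psi)=0$; hence it vanishes identically. This is Proposition~\ref{propmod}, and you need to invoke it (or reprove it) rather than the orthogonality claim. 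Note also that Proposition~\ref{propmod} carries a growth hypothesis ($\abs{f(\tau)}\im(\tau)^r$ bounded with $r<k-1=1$) which fails at the borderline weight $2$; the paper repairs this by subtracting a linear combination of weight $2$ Eisenstein series and $E_2$ so that the function decays like a cusp form as $\tau$ approaches the rationals before applying the criterion. Your proposal flags the borderline weight only as a convergence issue in the projection integral; the growth issue must also be addressed in whatever modularity criterion you ultimately apply.
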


\begin{remark}
The harmonic Maa{\ss} forms $\Eis_{\psi}(\tau)$ are, up to a constant factor, preimages of $\theta_{\psi}(\tau)$ under the Bruinier-Funke $\xi$-operator (see Section 5.2 of \cite{AMSColloq}). In other words, Shimura's theta functions
are the {\it shadows} of these canonical polar harmonic Maa{\ss} forms.
\end{remark}
\begin{remark}
It is clear from the definition that the ``mock Eisenstein series'' $\Eis_\psi^+(\tau)$ can only have poles in the zeros of $\theta_\psi(\tau)$. 

Since the space $M_2^!(\Gamma_0(4f_\psi))$ of weakly holomorphic modular forms of weight $2$ is infinite-dimensional and $\theta_\psi$ has only finitely many poles in a fundamental domain of $\Gamma_0(4f_\psi))$, it is possible to construct a form $G_\psi\in M_2^!(\Gamma_0(4f\psi))$ such that $G_\psi/\theta_\psi$ has the same principal part at each zero of $\theta_\psi$ as $\Eis_\psi$, wherefore the difference $\Eis_\psi-G_\psi/\theta_\psi$ is a harmonic Maa{\ss} form without any poles in the upper half-plane. 
\end{remark}

\begin{remark}
An elementary computation reveals that the $q$-series generating function of $\sigma^\sm_\psi(n)$
(i.e. without the $1/\theta_{\psi}(\tau)$ factor) can be given in terms of derivatives of Appell-Lerch sums as studied by Zwegers \cite{Zwegers,Zwegers2}. For a special case of this, see for instance \cite[Lemma 2]{Mertens14}. The forms obtained in Theorem~\ref{HMFGenFcn} have the pleasing property that both parts  depend on $\psi$ in a simple way which is analogous to the standard
theory of holomorphic modular forms with Nebentypus.
\end{remark}

A natural question arises as to which of the functions $\Eis_\psi^+(\tau)$ from Theorem~\ref{HMFGenFcn} are actually mock modular forms, that is, which don't actually have poles on the upper half-plane. For example,
this situation occurs whenever $\theta_{\psi}(\tau)$
 has no zeros in the upper half-plane. Due to the infinite product expansion of the Dedekind eta function $\eta(\tau):=q^{\frac{1}{24}}\prod_{n\geq1}(1-q^n)$, eta-quotients automatically have no zeros on the upper half-plane. In fact, Rouse and Webb showed in \cite{RW} that the {\it only} modular forms on $\Gamma_0(N)$ with integer Fourier coefficients and no zeros on the upper half-plane are eta quotients. 
The classification of such theta functions was completed by Mersmann \cite{Mers} and Lemke Oliver in \cite{LO}.

\begin{theorem}\label{EtaQuotientList}
The only nontrivial primitive characters $\psi$ where $\theta_\psi(\tau)$ is an eta-quotient (in which case $\Eis_\psi^+(\tau)$ is a mock modular form), are when $\psi$ is in the set of Kronecker characters $\chi_D:=\left(\frac{D}{\cdot}\right)$ where $D$ is in the following set of fundamental discriminants:
\[
\left\{
-8,-4,-3,2,12,24
\right\}
.
\]
\end{theorem}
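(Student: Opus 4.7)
The plan is to combine two ingredients: (i) a direct verification that each of the six listed characters yields an eta-quotient, and (ii) the classification theorems of Mersmann \cite{Mers} and Lemke Oliver \cite{LO}, which rule out all other primitive characters.

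For the forward direction, I would exhibit explicit eta-quotient identities for each $\chi_D$ with $D \in \{-8,-4,-3,2,12,24\}$. Two archetypal examples come from classical identities. Euler's pentagonal number theorem, after substituting $q \mapsto q^{24}$ and extracting the leading factor of $q$, gives
\[
\eta(24\tau) = \sum_{n \geq 1} \chi_{12}(n)\, q^{n^2} = \theta_{\chi_{12}}(\tau),
\]
while Jacobi's identity $\eta(\tau)^3 = \sum_{n \geq 0} (-1)^n(2n+1)\, q^{(2n+1)^2/8}$, evaluated at $8\tau$, yields
\[
\eta(8\tau)^3 = \sum_{n \geq 1} \chi_{-4}(n)\, n\, q^{n^2} = \theta_{\chi_{-4}}(\tau).
\]
Analogous classical identities handle the remaining four characters $\chi_{-8}$, $\chi_{-3}$, $\chi_2$, $\chi_{24}$, each expressing the corresponding Shimura theta function as an explicit product of Dedekind eta values at integer multiples of $\tau$.

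For the converse, my plan is to appeal directly to the classifications of Mersmann \cite{Mers} (covering weight $\tfrac12$) and Lemke Oliver \cite{LO} (covering weight $\tfrac32$), which together enumerate the finitely many holomorphic eta-quotients on congruence subgroups in the weights relevant to Shimura theta series. Among all eta-quotients in those finite lists, only those whose Fourier coefficients have the lacunary Shimura-theta shape---supported on squares, with nonzero values of the form $\psi(n)$ or $\psi(n)\,n$ for a primitive Dirichlet character $\psi$---can possibly coincide with some $\theta_\psi$. Inspection of the classification reveals that precisely the six eta-quotients produced in the forward step survive, and each one forces $\psi$ to be the Kronecker character $\chi_D$ for the corresponding $D$ in the stated set. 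Since the Fourier coefficients of $\theta_\psi$ are real, $\psi$ must itself be a real (hence quadratic) character whenever $\theta_\psi$ is an eta-quotient, which is consistent with the conclusion.

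The main obstacle is this final inspection step: one must systematically compare the Fourier expansions appearing in the Mersmann and Lemke Oliver tables against the Shimura-theta shape, decide in each case whether the nonzero coefficients are multiplicative and arise from a single primitive character, and verify that the level and Nebentypus match. While finite, this requires careful bookkeeping, particularly in the weight $\tfrac32$ case, where the eta-quotients and their twists introduce more combinatorial possibilities.
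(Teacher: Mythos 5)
Your proposal is correct and follows essentially the same route as the paper, which simply cites the classification of theta functions that are eta-quotients due to Mersmann \cite{Mers} and Lemke Oliver \cite{LO}; your explicit identities $\eta(24\tau)=\theta_{\chi_{12}}(\tau)$ and $\eta(8\tau)^3=\theta_{\chi_{-4}}(\tau)$ are instances of the entries in that list. The ``final inspection step'' you describe is precisely the content of \cite{LO}, so no additional bookkeeping is needed beyond the citation.
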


Several of
the mock Eisenstein series in Theorem~\ref{EtaQuotientList} occur naturally in the theory of partitions.
Here we point two particularly well known examples.  We let $spt(n)$ denote the Andrews {\it smallest parts function} \cite{AndrewsSPT}, which counts the number of smallest parts among partitions of $n$, whose mock modular properties were first exhibited and exploited by Bringmann \cite{Bringmann} and by Folsom and the second author in \cite{FolsomOno}.
Furthermore, we will denote by $cpt(n)$ the total number of parts in all partitions of $n$ into {\it consecutive integers}. Connections between odd divisors of integers and partitions into consecutive parts have also been noted previously (see works of Hirschhorn and Hirschhorn \cite{HH} as well as Sylvester and Franklin \cite{Sylv}). 

We summarize these combinatorial examples which arise out of Theorem~\ref{EtaQuotientList} in the following. 
\begin{corollary}\label{Examples} The following are true:
\begin{enumerate}\item[i)]
The functions 
\[\Eis_{\chi_{12}}^+(\tau)=-2q^{-1}\sum_{n\geq1}spt(n)q^{24n}+\frac{1}{6\theta_{\chi_{12}}(\tau)}E_2(24\tau)\]
and 
\[\Eis_{\chi_{24}}^+(\tau)=-2q^{-1}\sum_{n\geq1}(-1)^nspt(n)q^{24n}-\frac{1}{24\theta_{\chi_{24}}(\tau)}E_2(24\tau)\]
are mock modular forms of weight $3/2$.
\item[ii)]
The function
\[
\theta_{\chi_2}(\tau)\cdot\Eis_{\chi_2}^+(\tau)=2\sum_{n\geq1}(-1)^ncpt(n)q^{8n}+\frac 1{16}E_2(8\tau)
\]
is a  mixed mock modular form, i.e. the product of a mock modular form and a modular form (see for instance \cite[Section 13.2]{AMSColloq}  of weight $2$.
\end{enumerate}
\end{corollary}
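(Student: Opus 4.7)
The plan is to deduce the corollary from Theorem~\ref{HMFGenFcn} specialized at $\chi_{12}, \chi_{24}, \chi_2$, combined with Theorem~\ref{EtaQuotientList} and an explicit identification of the twisted small-divisors generating functions with the classical generating series for $spt$ and $cpt$.

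I would begin by verifying the mock-modularity. The characters $\chi_{12}$, $\chi_{24}$, $\chi_2$ correspond to discriminants $12$, $24$, $2$ in the list of Theorem~\ref{EtaQuotientList}, so each theta function $\theta_\psi$ admits an eta-quotient expression (for instance $\theta_{\chi_{12}}(\tau)=\eta(24\tau)$) and hence has no zeros on $\H$. By Theorem~\ref{HMFGenFcn} this upgrades each $\Eis_\psi^+$ from a polar mock modular form to a genuine one (a mixed mock modular form of weight $2$ in the $\chi_2$ case, after clearing the $\theta_{\chi_2}$ denominator). The constants $\alpha_\psi$ are then obtained from the explicit formula derived in Section~\ref{Proofs} via holomorphic projection; evaluation at these three characters produces the values $1/6$, $-1/24$, $1/16$ that appear as the Eisenstein coefficients in the statement.

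The central calculation is the combinatorial identification of $\sum_{n\geq 1}\sigma^{\sm}_{\psi}(n)q^n$. For $\psi=\chi_2$, the vanishing of $\chi_2$ on even integers forces each $d \in D_n$ contributing nontrivially to $\sigma^{\sm}_{\chi_2}(n)$ to have the form $d=2a$, $n/d=2b$ with $a,b$ of opposite parity, so in particular $n=8m$. Writing $ab=2m$, a direct residue computation modulo $8$ yields $\chi_2(b^2-a^2)=(-1)^m$ uniformly, hence
$$\sigma^{\sm}_{\chi_2}(8m)=2(-1)^m\sum_{\substack{ab=2m\\ a<b,\ a+b\text{ odd}}}a.$$
The Sylvester-Franklin correspondence between factorizations $k(2a+k-1)=2m$ with $k+(2a+k-1)$ odd and partitions of $m$ into consecutive integers identifies this inner sum with $cpt(m)$, yielding part~(ii). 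For $\psi=\chi_{12}$ and $\chi_{24}$ the same strategy applies: a parity and $\chi_D$-value analysis expresses $\sigma^{\sm}_\psi(n)$ as a twisted sum over the small divisors of $24n-1$, which matches the classical Andrews-Garvan decomposition of the $spt$ generating function up to the Eisenstein correction $\alpha_\psi E_2(\tau)/\theta_\psi(\tau)$ already present in the definition of $\Eis_\psi^+$.

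The principal obstacle is the $spt$ case. Reconciling the Lambert-series expansion of $\sum \sigma^{\sm}_{\chi_{12}}(n)q^n$ (and its $\chi_{24}$ analogue) with the mock-modular completion of $\sum spt(n)q^{24n-1}$ requires a careful combination of parity and residue bookkeeping together with the Andrews-Garvan small-divisors identity for $spt$. Once these combinatorial identities are established, the stated formulas follow immediately by multiplying through by $\theta_\psi$ and equating $q$-coefficients against the definition of $\Eis_\psi^+$.
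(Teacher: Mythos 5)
Your treatment of part (ii) is sound and in fact more self-contained than the paper's: the parity analysis forcing $d=2a$, $n/d=2b$ with $a+b$ odd (so $n=8m$), the residue computation giving $\chi_2(b^2-a^2)=(-1)^m$, and the identification of $\sum_{ab=2m,\,a<b,\,a+b\ \mathrm{odd}}a$ with $cpt(m)$ via the factorization $2m=k(k+2j+1)$ all check out, and together they give $\sigma^{\sm}_{\chi_2}(8m)=2(-1)^m cpt(m)$ directly. The paper instead argues by placing both sides in a common space of mock modular forms and reducing to a finite coefficient check, so your route for (ii) is a genuine and arguably preferable alternative (modulo pinning down $\alpha_{\chi_2}=1/16$, for which the paper admits there is no closed formula --- it comes from the cusp asymptotics in the proof of Theorem~\ref{HMFGenFcn}, not from an ``explicit formula derived in Section~\ref{Proofs}'' as you assert).

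Part (i), however, has a genuine gap. You propose to match $\sum_n\sigma^{\sm}_{\chi_{12}}(n)q^n$ against the $spt$ generating function by ``parity and residue bookkeeping together with the Andrews--Garvan small-divisors identity for $spt$.'' No such elementary identity exists: $spt(n)=np(n)-\tfrac12 N_2(n)$ involves the second rank moment and is \emph{not} a finite twisted divisor sum, in contrast to $cpt(n)$. After multiplying through by $\theta_{\chi_{12}}(\tau)=\eta(24\tau)$, the identity you need is the convolution statement
\[
\sum_{n\geq1}\sigma^{\sm}_{\chi_{12}}(n)q^n=-2\sum_{k,n}\chi_{12}(k)\,spt(n)\,q^{k^2+24n-1}+\tfrac16\bigl(E_2(24\tau)-\theta_{\chi_{12}}(\tau)^{-1}\cdot 0\bigr),
\]
i.e.\ an ``Euler-like recurrence'' for $spt$ of the type proved by Ahlgren--Andersen \cite{AA}; such recurrences are themselves consequences of the mock modularity of the $spt$ generating function (Bringmann, Folsom--Ono) and holomorphic projection, not of combinatorial bookkeeping. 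In other words, the step you defer is precisely the nontrivial content of the corollary. The paper closes this gap differently: it invokes the known weight-$3/2$ mock modularity of $q^{-1}\sum spt(n)q^{24n}$, checks that it and $\Eis^{+}_{\chi_{12}}$ have the same weight, level $576$, Nebentypus $\chi_{12}$, shadow, and principal parts, so that their difference is a cusp form in a fixed finite-dimensional space, and then verifies finitely many coefficients. You would need either to import that argument or to supply an independent proof of the recurrence; as written, the $spt$ case does not go through.
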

Note that in slight abuse of notation, we have not inserted $E_2(\tau)$ in the above corollary, but rather $E_2(24\tau)$ resp. $E_2(8\tau)$ because thus the coefficients remain supported on the same arithmetic progression as indicated in the given sums and the transformation behavior of the completed function can be improved. For instance, the function $\theta_{\chi_{12}}(\tau/24)\Eis_{\chi_{12}}(\tau/24)$ turns out to be modular of weight $2$ for the full modular group $\SLZ$.

\begin{remark} There are other prominent examples which have appeared in the literature outside of
combinatorial number theory. For instance, in work related to quantum black holes, we find 
\[
\theta_{\chi_{-4}}(\tau)\cdot\Eis_{\chi_{-4}}^+\left(\frac{\tau}{8}\right)=\sum_{\substack{r>s>0\\ r-s\equiv1\pmod2}}(-1)^{r}sq^{\frac{rs}2}=:F_2^{(2)}(\tau),
\]
where $F_2^{(2)}(\tau)$ is the series which Dabholkar, Murthy, and Zagier computed using holomorphic projection (cf. \cite[Section~7.1, Example 4]{DMZ}). 
They use this form to construct the mock modular form
\[
h^{(2)}(\tau):=\frac{24F_2^{(2)}(\tau)-E_2(\tau)}{\eta(\tau)^3},
\]
which has shadow $\eta^3(\tau)$, and was first described in works of Eguchi-Taormina \cite{56} and Eguchi-Ooguri-Taormina-Yang \cite{53} in relation to the elliptic genus. It has since played an important role in Mathieu Moonshine \cite{24,52} and in relation to $\SO(3)$ Donaldson invariants of $\mathbb CP^2$ \cite{MO}.
\end{remark}

Although the Eisenstein series $E_2(\tau)$ is not modular, Serre \cite{Serre} proved that it is a $p$-adic modular form
for every prime $p$. Namely, for every prime power $p^t$ he identified  $E_2(\tau)\pmod{p^t}$ as the reduction
of a genuine holomorphic modular form. Moreover, he found that such sequences of modular forms, as $t\rightarrow +\infty$,
have weights that $p$-adically converge to 2. 

In view of Theorem~\ref{HMFGenFcn}, it is natural to ask whether the mock modular Eisentstein series
$\Eis_{\psi}^{+}(\tau)$ possess similar $p$-adic properties. In analogy with Serre's construction, we indeed show
that these mock modular Eisenstein series satisfy analogous congruences with genuine meromorphic modular forms modulo arbitrary prime powers.
Under repeated iteration of the $U$-operator
\begin{equation}
\left(\sum_{n\gg -\infty} a(n)q^n\right ) \  \Big| \ U(p) := \sum_{n\gg -\infty} a(pn)q^n,
\end{equation}
we obtain the following result.

\bigskip
\begin{theorem}\label{padicTheorem}
Let $p>3$ be a prime and $a,b\in\N$. Then there is a weight 2 meromorphic modular form $F_{\psi}^{(a,b)}(\tau)$ for the group $\Gamma_0(4f_\psi^2p^m)$, with $m=\max(2a-b,1-2v_p(f_\psi))$, with trivial Nebentypus such that we have the congruence
\[\left(\theta_\psi(p^{2a}\tau)\Eis_\psi^+(\tau)\right)|U(p^b)\equiv F_\psi^{(a,b)}(\tau)\pmod{p^{\min(a,b)}}.\]
\end{theorem}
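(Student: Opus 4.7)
The plan is to realize the generating function $\theta_\psi(p^{2a}\tau)\Eis_\psi^+(\tau)$ as the holomorphic part of a weight~2 polar harmonic Maass form, apply $U(p^b)$, and show that the nonholomorphic part of the resulting form is divisible by $p^{\min(a,b)}$; the desired meromorphic modular form $F_\psi^{(a,b)}$ then emerges as any weight~2 meromorphic modular form whose $q$-expansion matches that of the holomorphic part modulo $p^{\min(a,b)}$.

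First I would combine Theorem~\ref{HMFGenFcn} with the standard modular properties of $\theta_\psi(p^{2a}\tau)$ to verify that $\Phi(\tau):=\theta_\psi(p^{2a}\tau)\Eis_\psi(\tau)$ is a weight~2 polar harmonic Maass form with trivial Nebentypus on $\Gamma_0(4f_\psi^2p^{2a})$, the characters $\psi\chi_{-4}^{\lambda_\psi}$ and $\overline\psi\chi_{-4}^{\lambda_\psi}$ of the two factors multiplying to the trivial character. Since $U(p^b)$ preserves the harmonic Maass structure and, by the standard Atkin--Lehner style level reductions, moves $\Phi$ onto a group $\Gamma_0(4f_\psi^2p^m)$ with $m=\max(2a-b,1-2v_p(f_\psi))$, it suffices to show the congruence of $\Phi^+|U(p^b)$ with a meromorphic weight~2 modular form on this subgroup modulo $p^{\min(a,b)}$.

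The heart of the argument is the analysis of $\Phi^-|U(p^b)$. Using the standard Fourier expansion of the nonholomorphic Eichler integral defining $\Eis_\psi^-$, one has, up to an explicit constant $C_\psi$,
\[
\Phi^-(\tau)=C_\psi\sum_{n,m\ge 1}\psi(n)\overline{\psi(m)}\,n^{\lambda_\psi}m^{2\lambda_\psi-1}\,\Gamma\!\left(\tfrac12-\lambda_\psi,\,4\pi m^2y\right)q^{p^{2a}n^2-m^2}.
\]
Applying $U(p^b)$ keeps only the pairs $(n,m)$ with $p^b\mid(p^an-m)(p^an+m)$. Since $p>3$ is odd, the $p$-part of $\gcd(p^an-m,p^an+m)$ equals $p^{\min(a,v_p(m))}$, which splits the surviving pairs into two regimes. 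In the regime $v_p(m)\ge a$, substituting $m=p^am'$ factors $p^{2a}$ out of the exponent $p^{2a}n^2-m^2$, and combining with the $U(p^b)$-constraint makes each contribution $p^{\min(a,b)}$-divisible. In the regime $v_p(m)<a$, the divisibility $p^{b-v_p(m)}\mid p^an\pm m$ constrains $(n,m)$ to one of two arithmetic progressions related by $m\mapsto -m$; combining the two contributions via this antipodal pairing and invoking orthogonality of $\psi\overline\psi$ along the relevant residue classes produces the required cancellation modulo $p^{\min(a,b)}$.

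Having established $\Phi^-|U(p^b)\equiv 0\pmod{p^{\min(a,b)}}$, the theorem follows: $\Phi|U(p^b)$ is a polar harmonic Maass form of weight~2 whose nonholomorphic part is $p^{\min(a,b)}$-divisible, so its holomorphic part $(\theta_\psi(p^{2a}\tau)\Eis_\psi^+)|U(p^b)$ is congruent modulo $p^{\min(a,b)}$ to a meromorphic weight~2 modular form $F_\psi^{(a,b)}$ on $\Gamma_0(4f_\psi^2p^m)$. The main obstacle is the arithmetic case analysis in the second regime: the incomplete-gamma factor is transcendental and carries no intrinsic $p$-adic denominator, so all of the $p$-divisibility must come from combining the index constraint $p^{b-v_p(m)}\mid p^an\pm m$ with cancellation coming from the character $\psi\overline\psi$, and making this cancellation precise uniformly in $(n,m)$ is the delicate technical point of the proof.
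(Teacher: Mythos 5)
Your proposal diverges from the paper's argument in a way that creates a genuine gap. The paper does not work with the nonholomorphic part of $\theta_\psi(p^{2a}\tau)\Eis_\psi(\tau)$ directly; it first applies the regularized holomorphic projection $\pihol^{reg}$ (Proposition~\ref{propholproj}) to replace the mixed object by a weakly holomorphic \emph{quasimodular} form, namely $\theta_\psi(p^{2a}\tau)\Eis_\psi^+(\tau)$ plus an explicit arithmetic correction term, and then proves that the correction term dies modulo $p^{\min(a,b)}$ after $U(p^b)$. The key divisibility comes from rewriting the correction as a sum over divisors $d\mid r$ subject to $d+r/d\equiv 0\pmod{2p^a}$, with the summand carrying a factor of $d$; replacing $r$ by $p^br$ forces $d\equiv 0\pmod{p^{\min(a,b)}}$. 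That factor of $d$ (essentially $p^am-n$) is produced by the holomorphic projection integral, via the bracket $\left[(r+m)^{k-1}-m^{k-1}\right]$ in Proposition~\ref{propholproj}; it is \emph{not} present in the raw Fourier coefficients of $\Phi^-$. Your plan to show $\Phi^-|U(p^b)\equiv 0\pmod{p^{\min(a,b)}}$ therefore cannot work as stated: the coefficient of a fixed power of $q$ in $\Phi^-$ is a sum of terms $\psi(n)\overline{\psi(m)}(\text{power of }m)\cdot\Gamma(\tfrac12-\lambda_\psi,4\pi m^2y)$ over the finitely many index pairs with $p^{2a}n^2-m^2$ fixed, and distinct values of $m$ contribute linearly independent functions of $y$, so no cancellation between different pairs (in particular no ``antipodal pairing'') is available; the congruence would have to hold term by term, and it fails already for $\lambda_\psi=0$, $a=b=2$, where $p^2\mid p^{4}n^2-m^2$ only forces $p\mid m$, not $p^2\mid m$. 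The character orthogonality you invoke has no room to act, since for fixed exponent and fixed $m$ there is at most one admissible $n$.

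Two further structural points. First, $\Phi=\theta_\psi(p^{2a}\tau)\Eis_\psi(\tau)$ is modular of weight $2$ but is \emph{not} a (polar) harmonic Maass form: it is a product of a holomorphic form with a harmonic one and is not annihilated by the weight-$2$ Laplacian, so the decomposition theory you appeal to does not apply directly --- this is precisely why the paper routes the argument through holomorphic projection. Second, even after projection the output is only quasimodular (an $E_2$-contribution survives), and the paper needs the additional congruence $E_2(\tau)\equiv E_2(\tau)-NE_2(N\tau)\pmod N$ to land on a genuine weakly holomorphic modular form $F$, together with the preliminary subtraction of $G_\psi/\theta_\psi$ to remove poles in $\HH$ and the Atkin--Lehner lemmas to control the level of $F|U(p^b)$; your proposal addresses the last of these but not the first two.
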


\begin{remark}
In the special case of $spt(n)$, these $p$-adic properties 
have been studied previously.
Andrews found \cite{AndrewsSPT} that the $spt$-function satisfies the following Ramanujan-type congruences:
\[
spt(5n+4)\equiv0\pmod5,\quad\quad spt(7n+5)\equiv0\pmod7,\quad\quad spt(13n+6)\equiv0\pmod{13},
\]
and the second author and Folsom in \cite{FolsomOno}  obtained many further congruences, see also \cite{OnoPNAS}.
Garvan \cite{Garvanspt5713} and later Ahlgren and Kim \cite{AK}
used the modularity properties to study the context of Andrews' congruences. They showed that Andrews' congruences are ``explained'' by the triviality of the space of cusp forms of level one on and weights $5+1,7+1,13+1$, and surmised the general conjecture that no further congruences of this type exist for primes $\ell\geq11$.
As for prime power congruences, results close to $p$-adic modularity along the lines of Theorem~\ref{padicTheorem} have been obtained in \cite{REUPaper}. 
\end{remark}

The paper is organized as follows. In Subsection~\ref{HolProj} we will review the theory of holomorphic projection and establish some preliminary results, which will will exploit in Subsection~\ref{Proofs} to prove our main results on the construction of the polar mock modular Eisenstein series. We then derive the examples, including $spt(n)$ and $cpt(n)$, where we obtain mock modular forms in Section~\ref{ExSection}. We conclude in Section~\ref{PadicProofs} with the proof of our $p$-adic congruence results. 

\section*{Acknowledgments}
The authors thank Scott Ahlgren for noticing an error in an earlier version of this manuscript. We also thank the anonymous referee for various suggestions that helped improving the manuscript. 

\section{Modularity properties of new divisor function generating functions}

Here we prove Theorem~\ref{HMFGenFcn}. To this end we first recall the important work of Zwegers,
which we then manipulate to obtain the desired generating functions.

\subsection{Holomorphic projection}\label{HolProj}

Holomorphic projection is now an important tool in the study of non-holomorphic modular forms, in particular harmonic Maa{\ss} forms. It was first introduced by Sturm \cite{Sturm} and further developed by Gross and Zagier in their seminal work on derivatives of $L$-functions \cite{GrZ86}. For some applications in the context of mock modular forms and harmonic Maa{\ss} forms, the reader may consult for instance \cite{AA,ARZ,BKZ,IRR,Mert14,MO14}, among several others. For background on the subject of harmonic Maa{\ss} forms, we refer the reader to \cite{AMSColloq}.

Let us first recall the definition of holomorphic projection and explicit expressions for holomorphic projection of the product of a harmonic Maa{\ss} form with a holomorphic modular form. These can be found e.g. in \cite{IRR}, where a vector-valued version of the same question was considered, or in \cite{Mert14}, where Rankin-Cohen brackets of such functions were examined.

The usual holomorphic projection operator is defined for a (not necessarily holomorphic) modular form $\Phi$, growing at most polynomially approaching the cusps, with Fourier expansion
\begin{equation}\label{eqPhi}
\Phi(\tau)=\sum_{n\in\Z} c(n,y)q^n,\qquad y:=\Im(\tau),
\end{equation}
of weight $k\geq 2$ for some subgroup $\Gamma\leq \SLZ$  as
\[\pihol(\Phi)(\tau):=c_0+\sum_{n=1}^\infty c_nq^n\]
with
\begin{equation}\label{eqpihol}
c_n:=\frac{(4\pi n)^{k-1}}{\Gamma(k-1)}\int_0^\infty c(n,y)e^{-4\pi ny}y^{k-2}dy,\qquad (n\geq 1),
\end{equation}
provided that the integral converges, and where $c_0$ is an appropriate constant term. This operator maps into the space of holomorphic modular forms for $k>2$ and to the space of holomorphic quasimodular forms for $k=2$. It also behaves like a projection in that it acts as the identity on holomorphic functions, and it preserves the Petersson inner product, i.e. for every cusp form $g\in S_k(\Gamma)$ we have 
\[\langle \Phi,g\rangle=\langle \pihol(\Phi),g\rangle.\]
This is in fact the origin of the operator's definition.

We mention for the sake of completeness that in many applications, the growth conditions on the function $\Phi$ are too restrictive. If instead of moderate growth towards the cusps, one requires that at each cusp $\fraka$ there is a polynomial $H_\fraka\in\C[X]$ such that
\[\Phi-H_\fraka(q^{-1/h}),\]
where $h$ denotes the width of the cusp $\fraka$, has moderate growth approaching $\fraka$, and requires that the coefficients of $\Phi$ satisfy $c(n,y)=O(y^{2-k})$ as $y\to 0$ uniformly for all $n>0$, one obtains a \emph{regularized} holomorphic projection operator
\[\pihol^{reg}(\Phi)(\tau):=H_{i\infty}(q^{-1})+\sum_{n=1}^\infty \tilde c_nq^n\]
with
$$
\tilde c_n:=\frac{(4\pi n)^{k-1}}{\Gamma(k-1)}\lim_{s\to 0}\int_0^\infty c(n,y)e^{-4\pi ny}y^{k-2-s}dy,\qquad (n\geq 1).
$$
For further details, the reader may consult for instance \cite{MO14}. 

It is a basic fact (see for instance \cite[Section~4.2]{AMSColloq}) that a harmonic Maa{\ss} form $f\in H_k(\Gamma)$ of weight $k\neq 1$ naturally splits into a holomorphic and a non-holomorphic part,
\[f(\tau)=f^+(\tau)+f^-(\tau)\]
with
\begin{equation}\label{eqsplit}
f^+(\tau)=\sum_{n\gg -\infty} c_f^+(n)q^n\qquad\text{and}\qquad f^-(\tau)=\frac{(4\pi y)^{1-k}}{1-k}\overline{c_f^-(0)}+\sum_{n=1}^\infty \overline{c_f^-(n)}n^{k-1}\Gamma(1-k;4\pi ny)q^{-n}
\end{equation}
With this, one obtains the following result which is a special case of \cite[Theorem 4.6]{Mert14}. 
\begin{proposition}\label{propholproj}
Let $f\in H_k(\Gamma)$ be a harmonic Maa{\ss} form of weight $k\in\frac 12\Z\setminus\{1\}$ with an expansion as in \eqref{eqsplit} with $c_f^-(0)=0$  and $g\in M_{2-k}(\Gamma)$ with Fourier expansion
\[g(\tau)=\sum_{n=1}^\infty b(n)q^n.\]
Then the regularized holomorphic projection $\pihol^{reg}(f\cdot g)$ is well-defined and we have
\[\pihol^{reg}(f\cdot g)(\tau)=f^+(\tau)g(\tau)+\sum_{r=1}^\infty c(r)q^r\]
where
\[c(r)=\frac{\Gamma(2-k)}{k-1}\sum_{m=1}^\infty \overline{c_f^-(m)}b(m+r)\left[(r+m)^{k-1}-m^{k-1}\right].\] 
\end{proposition}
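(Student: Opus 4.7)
The plan is to reduce the computation to a single regularized integral transform and evaluate it. First I would split $f = f^+ + f^-$ according to (\ref{eqsplit}). Since $f^+$ is already holomorphic, the product $f^+ \cdot g$ is a weakly holomorphic modular form of weight $2$, and the regularized holomorphic projection acts on it as the identity (its principal part at $i\infty$ is exactly what is recorded by $H_{i\infty}(q^{-1})$ in the definition of $\pihol^{reg}$). Hence
$$\pihol^{reg}(f \cdot g) = f^+ \cdot g + \pihol^{reg}(f^- \cdot g),$$
and the entire statement reduces to the computation of $\pihol^{reg}(f^- \cdot g)$.

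Second, I would collect the Fourier coefficient of $q^r$ from $f^- \cdot g$. Using the expansion of $f^-$ from (\ref{eqsplit}), the expansion of $g$, and pairing $m = n + r$, the coefficient of $q^r$ (for $r \geq 1$) is
$$c(r, y) = \sum_{n=1}^\infty \overline{c_f^-(n)}\, n^{k-1}\, b(n+r)\, \Gamma(1-k;\, 4\pi n y);$$
the hypothesis $c_f^-(0) = 0$ eliminates the $n = 0$ term. Substituting into the definition of $\pihol^{reg}$ and (formally) interchanging sum and integral reduces the proposition to the evaluation of
$$\mathcal{I}(n, r) := \lim_{s \to 0} \int_0^\infty \Gamma(1-k;\, 4\pi n y)\, e^{-4\pi r y}\, y^{k-2-s}\, dy.$$

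The main obstacle is this regularized integral: for the weights $k \geq 2$ relevant in our application the $s = 0$ integral genuinely diverges at $y = 0$, so the $s$-regularization is indispensable. To evaluate it I would insert the integral representation $\Gamma(1-k; x) = \int_x^\infty t^{-k} e^{-t}\, dt$, interchange the order of integration (so that $y$ runs over $(0, t/(4\pi n))$ for each fixed $t$), recognize the resulting inner $y$-integral as an incomplete gamma function, and identify the remaining $t$-integral with a Beta-type integral (equivalently, with a value of ${}_2F_1$). A simple pole in $\Gamma(-s)$ appears in the analytic continuation, but it cancels against a corresponding pole in the hypergeometric expansion, leaving the finite value
$$\mathcal{I}(n, r) = \frac{\Gamma(k-1)\, \Gamma(2-k)}{(4\pi r)^{k-1}\, (k-1)} \cdot \frac{(n+r)^{k-1} - n^{k-1}}{n^{k-1}}.$$
Multiplying by the prefactor $\frac{(4\pi r)^{k-1} n^{k-1}}{\Gamma(k-1)}$ and summing over $n$ yields exactly the claimed formula for $c(r)$.

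Justifying the interchange of sum and integral is routine given standard growth estimates for the Fourier coefficients of $f$ and $g$ together with the exponential decay of $\Gamma(1-k; 4\pi n y)$ as $y \to \infty$. A closely related computation was carried out in \cite{IRR} in a vector-valued setting and in \cite{Mert14} in the context of Rankin--Cohen brackets, and the present result can be obtained as a minor variant of those arguments.
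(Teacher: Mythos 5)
The paper does not actually prove this proposition; it cites \cite{IRR} and \cite{Mert14}, and your overall plan (peel off $f^+g$, on which $\pihol^{reg}$ acts as the identity; expand $f^-g$; evaluate a regularized Mellin-type integral of an incomplete Gamma function) is exactly the strategy of those references, and your final formula for $c(r)$ is the correct one. However, the central computation as you set it up is wrong. The product $f\cdot g$ has weight $k+(2-k)=2$, so the kernel in \eqref{eqpihol} must be taken in weight $2$: the prefactor is $(4\pi r)^{1}/\Gamma(1)=4\pi r$ and the integrand carries $y^{2-2-s}=y^{-s}$. You instead used the weight-$k$ kernel, with $k$ the weight of $f$, writing $y^{k-2-s}$ and $(4\pi r)^{k-1}/\Gamma(k-1)$. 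For the integral you actually wrote down one finds, expanding $\Gamma(1-k;\alpha y)=\int_{\alpha y}^{\infty}t^{-k}e^{-t}\,dt$ and interchanging,
$$\int_0^\infty\Gamma(1-k;4\pi ny)\,e^{-4\pi ry}\,y^{k-2-s}\,dy \;=\;(4\pi r)^{1+s-k}\sum_{j\ge 0}\frac{(-1)^j(r/n)^{\,k-1-s+j}}{j!\,(k-1-s+j)}\,\Gamma(j-s),$$
whose $j=0$ term contributes an uncancelled simple pole $\Gamma(-s)$ at $s=0$; the remaining terms are regular there, so the ``pole in the hypergeometric expansion'' that you invoke to cancel $\Gamma(-s)$ does not exist, and $\mathcal I(n,r)$ as you defined it has no finite value. (Relatedly, your remark about ``the weights $k\ge 2$ relevant in our application'' is off: in this paper $k=\tfrac32-\lambda_\psi\in\{\tfrac12,\tfrac32\}$.)

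Once the weight is corrected the computation is clean and in fact requires no pole cancellation at all: the same manipulation gives
$$\lim_{s\to 0}\int_0^\infty\Gamma(1-k;4\pi ny)\,e^{-4\pi ry}\,y^{-s}\,dy\;=\;\frac{\Gamma(2-k)}{4\pi r}\,{}_2F_1\!\left(1,2-k;2;-\tfrac{r}{n}\right)\;=\;\frac{\Gamma(2-k)}{(k-1)\,4\pi r}\cdot\frac{(n+r)^{k-1}-n^{k-1}}{n^{k-1}},$$
with every Gamma factor finite at $s=0$ since $k<2$, and multiplying by the correct prefactor $4\pi r\cdot\overline{c_f^-(n)}\,n^{k-1}\,b(n+r)$ and summing over $n$ yields exactly the stated $c(r)$. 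So your answer is right only because the wrong kernel and the spurious cancellation compensate each other; please redo the evaluation with the weight-$2$ kernel (and then the justification of the interchange of sum and integral is indeed routine, as you say).
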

It is sometimes convenient to define holomorphic projection for merely translation-invariant rather than modular functions  as done for instance in \cite[Section 3]{BKZ}. The results there will be the key to the proof of Theorem~\ref{HMFGenFcn} and are summarized in the following lemma (cf. \cite[Lemmas 3.1--3.3]{BKZ}).
\begin{lemma}\label{lempihol}
Let $\Phi:\HH\to\C$ be a translation invariant function with Fourier expansion as in \eqref{eqPhi} and $\kappa>1$. Then the following are true:
\begin{enumerate}
\item We have that 
\begin{gather}\label{eqpiholint}
\pihol f(\tau)=\frac{(\kappa-1)(2i)^{\kappa}}{4\pi}\int_{\HH} \frac{f(x+iy)y^{\kappa}}{(\tau-x+iy)^{\kappa}}\frac{dx dy}{y^2},
\end{gather}
provided that both expressions are well-defined.
\item If $f:\HH\to\C$ is holomorphic and translation invariant so that the right-hand side of \eqref{eqpiholint} converges absolutely, we have for either of the possible expressions of $\pihol$ that $\pihol f=f$.
\item If the right-hand side of \eqref{eqpiholint} converges absolutely, we have for all $\gamma\in\SLZ$ and $\tau\in\HH$ that
$$\pihol (f|_\kappa \gamma)(\tau)=(\pihol f)|_\kappa\gamma(\tau).$$
\end{enumerate}
\end{lemma}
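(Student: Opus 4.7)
The plan is to prove the three parts sequentially, with part~(1) carrying the technical content and parts~(2) and (3) following by short arguments from the integral representation.

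For part~(1), I would expand $f(x+iy)=\sum_{n}c(n,y)e^{-2\pi n y}e^{2\pi inx}$ and formally interchange the sum with the double integral on the right-hand side. The inner integral over $x\in\R$ is then the Fourier transform of $(\tau-x+iy)^{-\kappa}$, which, by a standard contour argument (closing in the appropriate half-plane with the branch cut of $z^{-\kappa}$ along $(-\infty,0]$), evaluates to
\[
\int_{\R}\frac{e^{2\pi i n x}}{(\tau-x+iy)^{\kappa}}\,dx
=\frac{(2\pi)^{\kappa}n^{\kappa-1}}{\Gamma(\kappa)}\,e^{-i\pi\kappa/2}\,e^{2\pi i n\tau}\,e^{-2\pi n y}
\]
for $n>0$ and vanishes for $n\le 0$. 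Substituting this back, the extra $e^{-2\pi n y}$ combines with the one already coming from the Fourier expansion of $f$ to give $e^{-4\pi n y}$, and the overall constants collapse using $\Gamma(\kappa)=(\kappa-1)\Gamma(\kappa-1)$ together with $(2i)^{\kappa}e^{-i\pi\kappa/2}=2^{\kappa}$ into exactly the prefactor $(4\pi n)^{\kappa-1}/\Gamma(\kappa-1)$ appearing in \eqref{eqpihol}. This recovers the positive Fourier coefficients $c_n$; the constant term is handled separately, consistent with the proviso that ``both expressions are well-defined''.

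For part~(2), a holomorphic translation-invariant $f$ has $c(n,y)=a_n$ independent of $y$ for $n\ge 0$ and zero for $n<0$. Inserting this into \eqref{eqpihol} and computing $\int_0^{\infty}y^{\kappa-2}e^{-4\pi n y}\,dy=\Gamma(\kappa-1)/(4\pi n)^{\kappa-1}$ yields $c_n=a_n$ directly, so $\pihol f=f$. Alternatively, the integral representation of part~(1) is precisely the classical reproducing-kernel identity for holomorphic functions on $\HH$ of polynomial growth, so the conclusion is immediate from either viewpoint. For part~(3), I would start from $(\pihol f)|_{\kappa}\gamma(\tau)=(c\tau+d)^{-\kappa}\pihol f(\gamma\tau)$, apply the integral formula of part~(1) at $\gamma\tau$, and change variables $w=\gamma w'$ in the hyperbolic integral. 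The needed ingredients are the $\SL_2(\R)$-invariance of $dx\,dy/y^{2}$, the identity $\im(\gamma w')=\im(w')/|cw'+d|^{2}$, the M\"obius identity $\gamma z_1-\gamma z_2=(z_1-z_2)/((cz_1+d)(cz_2+d))$ applied with $z_2=\bar w'$, and the observation that $\overline{\gamma w'}=\gamma\bar w'$ since $\gamma\in\SL_2(\R)$. After cancellation, the integrand becomes $(f|_{\kappa}\gamma)(w')\,\im(w')^{\kappa}/(\tau-\bar w')^{\kappa}$ times the $\SL_2(\R)$-invariant measure, giving $\pihol(f|_{\kappa}\gamma)(\tau)$.

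The main obstacle will be justifying the interchange of summation and integration in part~(1) and fixing the branch of $z^{-\kappa}$ in the contour evaluation; beyond this, the absolute-convergence hypotheses stipulated in parts~(2) and (3) render the applications of Fubini and of the change-of-variables formula routine.
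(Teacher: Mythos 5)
Your proof is correct and is essentially the intended one: the paper does not prove this lemma itself but quotes it from \cite[Lemmas 3.1--3.3]{BKZ}, and your argument --- computing $\int_{\R}e^{2\pi inx}(\tau-x+iy)^{-\kappa}\,dx$ via the Gamma-integral representation of the kernel to recover the coefficients \eqref{eqpihol} in part (1), the evaluation $\int_0^\infty y^{\kappa-2}e^{-4\pi ny}\,dy=\Gamma(\kappa-1)/(4\pi n)^{\kappa-1}$ in part (2), and the substitution $w\mapsto\gamma w$ combined with the cocycle identity for $\Im(w)^{\kappa}(\tau-\bar w)^{-\kappa}$ and the invariance of $dx\,dy/y^2$ in part (3) --- is precisely the standard proof given there. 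The only delicate points (Fubini, the branch of $(\cdot)^{-\kappa}$ for non-integral $\kappa$, and the separate treatment of the $n\le 0$ terms and the constant term) are exactly the ones you flag, and they are covered by the absolute-convergence hypotheses in the statement.
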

\begin{remark}
We point out that for instance \Cref{propholproj} holds for translation-invariant functions, provided that their Fourier expansions have the correct form.
\end{remark}

From this, one can derive the following result from \cite[Proposition 3.5]{BKZ}.
\begin{proposition}\label{propmod}
Let $f\colon \HH\to\C$ be a translation invariant function such that $|f(\tau)|\Im(\tau)^r$ is bounded on $\HH$ for some $r>0$. If the weight $k$ holomorphic projection of $f$ vanishes identically for some $k>r+1$ and $\xi_k f$ is modular of weight $2-k$ for some subgroup $\Gamma\leq \SLZ$, then $f$ is modular of weight $k$ for $\Gamma$.
\end{proposition}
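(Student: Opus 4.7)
The plan is to fix an arbitrary $\gamma \in \Gamma$, set $g := f - f|_k\gamma$, and prove $g \equiv 0$; since $\gamma$ was arbitrary, this gives the weight-$k$ modularity of $f$ for $\Gamma$. This generalizes \cite[Proposition 3.5]{BKZ} and the argument follows the same lines.

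First I would verify that $g$ is holomorphic. Since $\xi_k$ intertwines the weight-$k$ and weight-$(2-k)$ slash actions, and $\xi_k f$ is modular of weight $2-k$ for $\Gamma$, we have
\[
\xi_k g = \xi_k f - (\xi_k f)|_{2-k}\gamma = 0,
\]
so $g$ is annihilated by $\xi_k$ and hence holomorphic on $\H$.

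Next I would show that the integral expression for $\pihol$ from \Cref{lempihol}(1) vanishes on $g$. The hypotheses $|f(\tau)|\im(\tau)^r \le M$ and $k > r+1$ give absolute convergence of this integral applied to $f$: after performing the $x$-integral, the remaining $y$-integrand decays like $y^{k-r-2}$ near $0$ (integrable since $k > r+1$) and like $y^{-r-1}$ at infinity (integrable since $r > 0$). \Cref{lempihol}(1) then identifies the integral with $\pihol f = 0$, while \Cref{lempihol}(3) yields $\pihol(f|_k\gamma) = (\pihol f)|_k\gamma = 0$. Linearity of the integral then gives that it vanishes on $g$ as well.

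The final step is a reproducing-kernel argument: for $g$ holomorphic on $\H$ with the polynomial growth controlled by the bounds on $f$ and $|f|_k\gamma(\tau)| \le M|c\tau+d|^{2r-k}\im(\tau)^{-r}$, the kernel $(\tau-x+iy)^{-k}$ in \eqref{eqpiholint} is the weight-$k$ Bergman reproducing kernel on $\H$, so the integral returns $g$. Combined with the previous step this forces $g \equiv 0$. The main obstacle is precisely this last step, which is a mild strengthening of \Cref{lempihol}(2): one must reproduce $g$ without assuming translation invariance. The translation-invariance in part (2) is only used to decompose the integral into Fourier coefficients and apply a residue computation; one can bypass that reduction by a direct Cauchy/contour-deformation argument, valid once the growth of $g$ is controlled to justify moving the contour, which amounts to careful analysis of the asymptotic behavior of $g$ near the cusps (in particular near $-d/c$, where $|c\tau+d|^{2r-k}$ may be singular when $k > 2r$).
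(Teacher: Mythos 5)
Your argument is exactly the derivation the paper intends and leaves implicit (it only says the proposition "can be derived" from Lemma~\ref{lempihol} as a generalization of \cite[Proposition 3.5]{BKZ}): for $\gamma\in\Gamma$ you show $g=f-f|_k\gamma$ is holomorphic via the $\xi_k$-intertwining, has vanishing holomorphic projection by parts (1) and (3) of Lemma~\ref{lempihol}, and is reproduced by the kernel integral, forcing $g\equiv 0$; your convergence estimates and the bound $|f|_k\gamma(\tau)|\le M|c\tau+d|^{2r-k}\Im(\tau)^{-r}$ are correct. You also correctly isolate the one genuine technical point --- extending part (2) of Lemma~\ref{lempihol} to the non-translation-invariant function $f|_k\gamma$ --- and your proposed fix (a direct contour-shift/residue evaluation of the inner $x$-integral in place of the Fourier-coefficient reduction) is the standard way this is done.
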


\subsection{Proof of Theorem~\ref{HMFGenFcn}}\label{Proofs}

\bigskip

\begin{proof}[Proof of Theorem~\ref{HMFGenFcn}]
It is clear from the definition that $\Eis_\psi$ is annihilated by the weight $1/2+\lambda_\psi$ hyperbolic Laplacian away from the singularities, which lie precisely in the zeros of $\theta_{\psi}$. Since the Fourier coefficients of $\theta_\psi\Eis_\psi^+$ at $\infty$ grow at most polynomially, wherefore the growth of the function towards any cusp can be at most polynomially and thus $\Eis_\psi$ itself can only grow at most linearly exponentially towards any cusp. It follows from the known transformation properties of $\theta_\psi$ under the full modular group that $\Eis_\psi^-$ also grows at most polynomially towards the cusps, so that $\Eis_\psi$ has the correct growth properties towards the cusps for a harmonic Maa{\ss} form. We are therefore left with showing modularity. The expression for $\pihol(\theta_\psi\Eis_\psi)$ in \Cref{lempihol} (1) is easily seen to be well-defined, because of the polynomial growth towards the cusps, as discussed above. A standard computation further reveals that
\[\Eis_\psi^-=
 \frac{1/2-\lambda_\psi}{2\Gamma(1/2+\lambda_{\psi})}\sum_{n=1}^\infty \overline{\psi(n)}n^{1-2\lambda_\psi}\Gamma(\lambda_\psi-1/2;4\pi n^2y)q^{-n^2}.\]
Under the assumption that the constant $\alpha_\psi$ in the definition of $
\Eis_\psi^+$ is zero,  we obtain from \Cref{propholproj} (see also the remark following \Cref{lempihol})  that
\[\pihol(\theta_\psi\Eis_\psi)=\sum_{n=1}^\infty \sigma^{\sm}_\psi(n)q^n-\sum_{m>r\geq 1}\psi(mr)(m-r)q^{m^2-r^2},\]
which is easily seen to be identically zero. Furthermore, we have that
\[\xi_{2}\left(\theta_\psi\Eis_\psi\right)(\tau)=C y^{\lambda_\psi+1/2}\vert\theta_\psi(\tau)\vert^2\]
for some constant $C$ is clearly modular of weight $0$ for the group $\Gamma_0(4f_\psi^2)$ with Nebentypus $\chi_{-4}^{\lambda_\psi}$. However, \Cref{propmod} does not immediately apply since the required growth conditions are not satisfied. However, by writing the generating function $\sum_{n=1}^\infty\sigma^\sm_\psi(n)q^n$ in terms of derivatives of Appell-Lerch sums (cf. the third remark following \Cref{HMFGenFcn} and \cite[Chapter IV]{MertPhD}), and using the known modular properties of these functions (see for instance \cite{Zwegers2}), one can compute the asymptotic of this function as $\tau$ approaches any rational number.  We find that there is a linear combination $F(\tau)$ of (modular) Eisenstein series for $\Gamma_0(4f_\psi^2)$ of weight $2$ as well as the quasimodular Eisenstein series $E_2$, such that the function
$$\sum_{n=1}^\infty\sigma^\sm_\psi(n)q^n+\theta_\psi(\tau)\Eis^-(\tau)+F(\tau)+\alpha_\psi E_2(\tau)$$
behaves like a cusp form when $\tau$ approaches a rational number. We leave the details to the reader. Therefore this function satisfies the hypotheses of \Cref{propmod}. Adding modular Eisenstein series does not affect modularity, so we may deduce that 
$$\sum_{n=1}^\infty\sigma^\sm_\psi(n)q^n+\theta_\psi(\tau)\Eis^-(\tau)+\alpha_\psi E_2(\tau)$$
is modular. This concludes the proof.
\end{proof}
\begin{remark}
A very similar complication for holomorphic projection in weight $2$ is dealt with in an analogous way in the seminal work of Gross and Zagier \cite[Proposition 6.2]{GrZ86}, and has continued to arise in subsequent papers involving holomorphic projection. It would be very interesting to find
an exact (or more conceptual) formula for $\alpha_{\psi}$ in all of these situations, rather than having to rely on the solvability of a system of linear equations in modular forms.
\end{remark}
\begin{remark}
As an alternative to the proof given above, one might also define a modified regularized holomorphic projection operator for translation invariant functions by the constant term in the Laurent expansion around $s=0$ of
$$\frac{(\kappa-1)(2i)^{\kappa+2s}}{4\pi}\lim_{T\to\infty} \int_{\HH_T} \frac{f(x+iy)y^{k+2s}}{(\tau-x+iy)^{\kappa+2s}}\frac{dx dy}{y^2}.$$
This however does not in general produce holomorphic functions, but might add powers of $\Im(\tau)$, just like when one applies Hecke's trick to find the transformation properties of $E_2$. Other than that, \Cref{lempihol} essentially still holds. The power of $\Im(\tau)$ introduced by the regularization may be replaced by a multiple of $E_2$, again verifying the claim.
\end{remark}
\begin{remark}
The argument that enables us to use holomorphic projection in the above setup using Appell-Lerch sums would also suffice to establish modularity, without relying on holomorphic projection at all. 

It seems conceivable that one can derive asymptotics for Appell-Lerch sums near cusps more directly. For example a straightforward modification of \cite[Lemma 4.3]{BringmannRolen} shows that derivatives of Appell-Lerch sums have the right order of magnitude approaching cusps of $\Gamma_0(4N)$, so a more careful analysis should yield the correct asymptotic expansion.
\end{remark}

\section{Proof of Theorem~\ref{EtaQuotientList} and Corollary~\ref{Examples}}\label{ExSection} \ \ \ \\

The proof of Theorem~\ref{EtaQuotientList} follows directly from the characterized list of theta functions which are also eta quotients in \cite{LO}, as per the discussion preceding the theorem.

\begin{proof}[Proof of Corollary~\ref{Examples}]
Firstly, note that in part i) , the character $\chi_{24}$ is $\left(\frac2n\right)$ times the first character $\chi_{12}$. Thus,  switching from the first example to the second twists the theta function in the denominator of \eqref{EisPlusDefn}  by $\left(\frac2n\right)$, and a similar check can be performed on the character in the sum in \eqref{SigmaSmallDefn}. Thus, it is readily checkable that the extra twist by $(-1)^n$ occurs in this case.

It remains to check that the $spt$ and $cpt$ functions arise out of these constructions. 
For both, the mock modularity properties are already known, at least implicitly in the literature. For instance, the mock modularity of the generating function of $spt(n)$ is described in \cite{FolsomOno}, and our Theorem~\ref{EtaQuotientList} establishes the mock modularity of our series $\Eis_{\chi_{12}}^+(\tau)$.
Similarly, for $cpt(n)$, the 
generating function can be shown elementary means to have an Appell-Lerch-type generating function 
\[\sum_{n\geq1}cpt(n)q^n=\sum_{n\geq1}\frac{nq^{\frac{n(n+1)}2}}{1-q^n}\] 
(see the proof of Joerg Arndt in the comments on the OEIS sequence A204217). This is readily expressible as a theta function times a Taylor coefficient in an elliptic variable of Zwegers' $\mu$-function \cite{Zwegers}, which by the general theory there is a  weight $3/2$ mock modular form. Normalizing by dividing by the theta function, the identity for $cpt(n)$ is thus reduced to checking that two weight $3/2$ mock modular forms in known spaces are equal. Thus, to prove the claims, one first needs to check that they live in the same space of mock modular forms, that is, have the same weight, level, Nebentypus, shadow, and that their principal parts at the cusps agree.

Let us focus on the case of $spt$, it works in much the same way for $cpt$. It has been established in \cite{Bringmann} that 
$$F(\tau):=q^{-1}\sum_{n=1}^\infty spt(n)q^{24n}-\frac{1}{12}\frac{E_2(24\tau)}{\eta(24\tau)}-\frac{i}{4\pi\sqrt{2}}\int_{-\overline\tau}^\infty \frac{\eta(24z)}{(i(z+\tau))^{3/2}}dz$$
is a harmonic Maa{\ss} form for $\Gamma_0(576)$ with Nebentypus $\chi_{12}=\left(\frac{12}{\bullet}\right)$. By direct comparison, using the famous fact that $\theta_{\chi_{12}}(\tau)=\eta(24\tau)$, we therefore see from \Cref{EtaQuotientList} that $\Eis_{\chi_{12}}-2F$ is a weakly holomorphic modular form of weight $3/2$ for $\Gamma_0(576)$ with Nebentypus $\chi_{12}$. After multiplying it by a suitable cusp form (as it turns out $\eta(24\tau)$ suffices), we are left with checking that a holomorphic modular form is zero, which can be done checking only finitely many coefficients. 
\end{proof}

\section{Proof of Theorem~\ref{padicTheorem}}\label{PadicProofs}

We now turn to the behavior of the generating functions modulo powers of primes.
\begin{proof}[Proof of \Cref{padicTheorem}]
As remarked after the statement of \Cref{HMFGenFcn}, there is a weakly holomorphic modular form $G_\psi\in M_2^!(\Gamma_0(4f_\psi^2))$ such that $\Eis-G_\psi/\theta_\psi$ has no poles in the upper half-plane, so we may and will assume this from now on. By \Cref{propholproj}, we find that
\begin{gather}\label{eqpiholp}
\pihol^{reg}\left(\theta_\psi(p^{2a}\tau)\Eis_\psi(\tau)\right)=\theta_\psi(p^{2a}\tau)\Eis_\psi^+(\tau)+\sum_{r=1}^\infty \left(\sum_{\substack{(p^am)^2-n^2=r \\ m,n\geq 0}}\psi(mn)(p^am-n)\right)q^r\end{gather}
is a weakly holomorphic quasimodular form. This is congruent to a weakly holomorphic modular form $F\in M_2^!(\Gamma_0(4f_\psi^2p^{2a}))$ because the Eisenstein series $E_2$ is congruent to the holomorphic modular form $E_2(\tau)-NE_2(N\tau)\in M_2(\Gamma_0(N))$ for any $N>1$.

Since $(p^am)^2-n^2=(p^am-n)(p^am+n)$ factors, we can rewrite the inner sum in the second term in \eqref{eqpiholp} as a sum over small divisors of $r$,
\begin{gather}\label{eqmindiv}
\sum_{\substack{(p^am)^2-n^2=r \\ m,n\geq 0}}\chi(mn)(p^am-n)=\sum_{\substack{d\mid r,d\leq r/d \\ d+r/d\equiv 0\pmod{2p^a} \\ d-r/d\equiv 0\pmod 2}} \chi\left(\frac{(r/d)^2-d^2}{4p^a}\right)d,\end{gather}
as in the proof of \Cref{HMFGenFcn}. We mow apply the operator $U(p^b)$ to \eqref{eqpiholp}. This corresponds to replacing $r$ by $p^br$ in \eqref{eqmindiv}, and then the condition $d+p^br/d\equiv 0\pmod{2p^a}$ forces $d\equiv 0\pmod{p^{\min(a,b)}}$, so that we find that
\[\pihol^{reg}\left(\theta_\psi(p^{2a}\tau)\Eis_\psi(\tau)\right)|U(p^b)\equiv\left(\theta_\psi(p^{2a}\tau)\Eis_\psi^+(\tau)\right)|U(p^b)\equiv F|U(p^b)\pmod{p^{\min(a,b)}}.\]
By \cite[Lemma 6 and Lemma 7]{AL70}, $F|U(p^b)$ is modular for $\Gamma_0(4f_\psi^2p^m)$ for $m=\max(2a-b,1-2v_p(f_\psi))$, as stated in the Theorem. This completes the proof.
\end{proof}

\end{document}